\newtheorem{theorem}{Theorem}
\newtheorem{lemma}[theorem]{Lemma}
\newcommand{\N}{{\mathbb N}}
\newcommand{\R}{{\mathbb R}}
\newcommand{\supp}{\operatorname{supp}}
\newcommand{\Z}{{\mathbb Z}}
\begin{document}

\title[]{A note on the solution of the Mexican hat problem}

\author[]{H.-Q. Bui and R. S. Laugesen}
\address{Department of Mathematics, University of Canterbury,
  Christchurch 8020, New Zealand}
\email{Q.Bui\@@math.canterbury.ac.nz}
\address{Department of Mathematics, University of Illinois, Urbana,
IL 61801, U.S.A.} \email{Laugesen\@@illinois.edu}
\date{\today}

\keywords{Wavelet, spanning, completeness, Mexican hat.}

\subjclass[2000]{Primary 42C15.}

\begin{abstract}
We prove a technical estimate needed in our recent solution of the completeness question for the non-orthogonal Mexican hat wavelet system, in $L^p$ for $1<p<2$ and in the Hardy space $H^p$ for $2/3 < p \leq 1$.
\end{abstract}

\maketitle

\section{\bf Introduction}
\label{introduction}

Recently we solved the Mexican hat wavelet completeness problem \cite[\S8]{bl6}. Our proof relied on a certain technical estimate $\Delta_*(\Phi,\Psi) < 1$, which we prove in this note on the ArXiv.

We begin with some definitions. Let
\[
\Psi(\xi) = (2\pi \xi)^2 \exp(-2\pi^2 \xi^2) \qquad \text{and} \qquad  \Phi = \kappa/\Psi ,
\]
with $\kappa$ being the ``double bump'' function
\[
\kappa(\xi) =
\begin{cases}
0 , & \xi \in [0,1/12] , \\
\sin^2 \big( (12\xi - 1)\pi/2 \big) , & \xi \in [1/12,1/6] , \\
\cos^2 \big( (6\xi - 1)\pi/2 \big) , & \xi \in [1/6,1/3] , \\
0 , & \xi \in [1/3,\infty) , \\
\kappa(-\xi), & \xi \in (-\infty,0) .
\end{cases}
\]
(Note $\Psi$ is the Fourier transform of the Mexican hat function $\psi(x)=(1-x^2)e^{-x^2/2}$.) Put
\[
\Theta(\xi) = \xi \Phi^\prime(\xi) \qquad \text{and} \qquad \Gamma(\xi) = \xi \Phi(\xi) .
\]
Define
\begin{align*}
& \Delta(\Phi,\Psi) \\
& = \sum_{l \neq 0} \big\lVert \sum_{j \in \Z} |\Phi(\xi 2^{-j})
\Psi(\xi 2^{-j} - l)| \big\rVert_{L^\infty(\R)}^{1/2} \,
\big\lVert \sum_{j \in \Z} |\Phi(\xi 2^{-j} + l) \Psi(\xi 2^{-j})| \big\rVert_{L^\infty(\R)}^{1/2} ,
\end{align*}
and let
\[
\Delta_*(\Phi,\Psi) = \Delta(\Phi,\Psi) + 2\Delta(\Theta,\Psi) + 2\Delta(\Gamma,\Psi^\prime) .
\]

\section{\bf Proof that $\Delta_*(\Phi,\Psi) < 1$}
\label{examples}

We will prove $\Delta_*(\Phi,\Psi)<0.52$. If rigor is not required then the better numerical estimate $\Delta_*(\Phi,\Psi) < 0.03$ can be used. The purpose of this note is simply to demonstrate that a rigorous estimate can be obtained.

First we simplify the expression for $\Delta$.
\begin{lemma} \label{deltaexpression}
Assume $A$ and $B$ are measurable functions on $\R$. Suppose $A$ is supported in $[-1/3,-1/12] \cup [1/12,1/3]$, that $|A|$ and $|B|$ are even functions, and that $|B(\xi)|$ is decreasing for $\xi \geq 2/3$.
Then
\[
\Delta(A,B) \leq 2\sqrt{2} \lVert A(\xi)
B(1 - \xi) \rVert_{L^\infty[1/12,1/3]} + 2\sqrt{2} \lVert A \rVert_{L^\infty[1/12,1/3]} \sum_{l=2}^\infty
|B(l-1/3)| .
\]
\end{lemma}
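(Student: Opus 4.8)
The plan is to exploit the evenness of $|A|$ and $|B|$ to collapse the sum over $l \neq 0$ into twice the sum over $l \geq 1$, and then to estimate the two $L^\infty$ norms in each summand by hand. Write $\eta = \xi 2^{-j}$. The substitution $\xi \mapsto -\xi$, which preserves the $L^\infty(\R)$ norm, together with $|A(-\eta)| = |A(\eta)|$ and $|B(-\eta)| = |B(\eta)|$, shows that $\big\lVert \sum_{j} |A(\xi 2^{-j}) B(\xi 2^{-j} - l)| \big\rVert_{L^\infty(\R)}$ is unchanged under $l \mapsto -l$, and similarly for the companion factor with $A$ and $B$ in the other slots. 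Hence
\[
\Delta(A,B) = 2 \sum_{l=1}^\infty F_l^{1/2} G_l^{1/2}, \quad F_l := \big\lVert \sum_{j} |A(\xi 2^{-j}) B(\xi 2^{-j} - l)| \big\rVert_{L^\infty(\R)}, \quad G_l := \big\lVert \sum_{j} |A(\xi 2^{-j} + l) B(\xi 2^{-j})| \big\rVert_{L^\infty(\R)} .
\]

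Next I would bound $F_l$ and $G_l$ for $l \geq 2$. For a.e.\ fixed $\xi$, the dyadic sequence $\{\xi 2^{-j}\}_{j \in \Z}$ meets an interval whose endpoints have ratio at most $2$ in at most one point, and one whose endpoints have ratio at most $2^2$ in at most two points. Since $A$ is supported where $|\eta| \in [1/12,1/3]$ (endpoint ratio $2^2$), the $j$-sum defining $F_l$ has at most two nonzero terms; since $A(\eta + l) \neq 0$ forces $|\eta| \in [l - 1/3, l + 1/3]$ (endpoint ratio $\leq 2$ for every $l \geq 1$), the $j$-sum defining $G_l$ has at most one nonzero term. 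In any nonzero term $|A(\cdot)| \leq \lVert A \rVert_{L^\infty[1/12,1/3]}$ by evenness of $|A|$, while the argument fed to $B$ has absolute value in $[l - 1/3, l - 1/12] \cup [l + 1/12, l + 1/3] \subset [2/3,\infty)$, so the monotonicity of $|B|$ on $[2/3,\infty)$ bounds the $B$-factor by $|B(l-1/3)|$. This gives, for $l \geq 2$,
\[
F_l \leq 2 \lVert A \rVert_{L^\infty[1/12,1/3]}\, |B(l-1/3)| , \qquad G_l \leq \lVert A \rVert_{L^\infty[1/12,1/3]}\, |B(l-1/3)| .
\]

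For $l = 1$ the argument fed to $B$ only has absolute value $\geq 2/3$, so estimating $A$ and $B$ separately would discard the decay; instead I keep them paired. In a nonzero term of $F_1$ set $\zeta = |\eta|$, and in one of $G_1$ set $\zeta = |\eta+1|$; then $\zeta \in [1/12,1/3]$ and, by evenness of $|A|$, the term equals $|A(\zeta)|\,|B(1-\zeta)|$ when the argument of $A$ lies in the positive half $[1/12,1/3]$ of the support, and $|A(\zeta)|\,|B(1+\zeta)|$ when it lies in the negative half $[-1/3,-1/12]$. In the first case the term is already at most $\lVert A(\xi) B(1-\xi) \rVert_{L^\infty[1/12,1/3]}$; in the second, $1 + \zeta > 1 - \zeta \geq 2/3$, so monotonicity of $|B|$ gives $|A(\zeta)|\,|B(1+\zeta)| \leq |A(\zeta)|\,|B(1-\zeta)| \leq \lVert A(\xi) B(1-\xi) \rVert_{L^\infty[1/12,1/3]}$. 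Hence $F_1 \leq 2 \lVert A(\xi) B(1-\xi) \rVert_{L^\infty[1/12,1/3]}$ and $G_1 \leq \lVert A(\xi) B(1-\xi) \rVert_{L^\infty[1/12,1/3]}$. Substituting everything into $\Delta(A,B) = 2 \sum_{l \geq 1} F_l^{1/2} G_l^{1/2}$ and using $\sqrt{2x}\,\sqrt{x} = \sqrt{2}\,x$ on each summand yields precisely the asserted inequality.

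I expect the main obstacle to be the bookkeeping in the $l = 1$ step: one must carefully track the even reflections so that each sign case for $F_1$ and $G_1$ is genuinely dominated by $\lVert A(\xi) B(1-\xi) \rVert_{L^\infty[1/12,1/3]}$, and one must verify that the counts ``endpoint ratio $\leq 2$ forces at most one nonzero term'' and ``endpoint ratio $\leq 2^2$ forces at most two'' really hold for a.e.\ $\xi$, which is all that the $L^\infty$ norms require.
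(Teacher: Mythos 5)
Your proposal is correct and follows essentially the same route as the paper's proof: reduce to $l\geq 1$ by evenness, observe that the dyadic sum has at most two nonzero terms in the first factor and one in the second, use $|B(l+\zeta)|\leq|B(l-\zeta)|$ (monotonicity of $|B|$ on $[2/3,\infty)$) to pass to $B(l-\zeta)$, and split off the $l=1$ term. The only differences are cosmetic (you separate the $A$ and $B$ factors for $l\geq 2$ before summing, and you phrase the one-term count via the endpoint ratio of $[l-1/3,l+1/3]$ where the paper proves disjointness of the dilates of $\supp(A)-l$), so no changes are needed.
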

\begin{proof}
We start by noting
\begin{equation} \label{Best}
|B(l + \xi)| \leq |B(l - \xi)| \qquad \text{whenever $\xi \in [1/12,1/3], \quad l \in \N$,}
\end{equation}
because $l+\xi > l-\xi \geq 1-1/3=2/3$ and $|B|$ is decreasing on $[2/3,\infty)$.

Now consider $l \neq 0$. The support hypothesis on $A$ implies that
\begin{align}
& \big\lVert \sum_{j \in \Z} |A(\xi 2^{-j}) B(\xi 2^{-j} - l)| \big\rVert_{L^\infty(\R)} \notag \\
& = \lVert |A(\xi)
B(\xi-l)| + |A(\xi/2) B(\xi/2 - l)| \rVert_{L^\infty([-1/3,-1/6] \cup [1/6,1/3])} \notag \\
& \leq 2 \lVert A(\xi) B(\xi-l) \rVert_{L^\infty([-1/3,-1/12] \cup [1/12,1/3])} \notag \\
& \leq 2 \max_\pm \lVert A(\xi) B(|l| \pm \xi) \rVert_{L^\infty[1/12,1/3]} \qquad \text{by evenness of $|A|$ and $|B|$} \notag \\
& = 2 \lVert A(\xi) B(|l| - \xi) \rVert_{L^\infty[1/12,1/3]} \label{simple1}
\end{align}
by \eqref{Best}.

Next we claim the sets $\{ (\supp(A) - l)2^j \}_{j \in \Z}$ are disjoint. When $l<0$,
\[
\supp(A)-l \subset \big[ |l|- \frac{1}{3}, |l|+
\frac{1}{3} \big] ,
\]
and the left endpoint of this last interval dilates under multiplication by $2$ to the right of the right endpoint,
because $2(|l|-1/3) \geq |l|+1/3$; argue similarly for disjointness when $l>0$.

The disjointness ensures that
\begin{align}
\big\lVert \sum_{j \in \Z} |A(\xi 2^{-j} + l) B(\xi 2^{-j})| \big\rVert_{L^\infty(\R)}
& = \lVert A(\xi + l) B(\xi) \rVert_{L^\infty(\supp(A)-l)} \notag \\
& = \lVert A(\xi) B(\xi-l) \rVert_{L^\infty(\supp(A))} \notag \\
& = \lVert A(\xi) B(|l|-\xi) \rVert_{L^\infty[1/12,1/3]} \label{simple2}
\end{align}
by evenness of $|A|$ and $|B|$ and estimate \eqref{Best}.

By putting the estimates \eqref{simple1} and \eqref{simple2} into the definition of $\Delta(A,B)$, we conclude that
\begin{align*}
\Delta(A,B) & \leq 2\sqrt{2} \sum_{l = 1}^\infty \lVert A(\xi) B(l-\xi) \rVert_{L^\infty[1/12,1/3]} .
\end{align*}
The lemma now follows by splitting off the term with $l=1$ and using that $|B|$ is decreasing on $[2/3,\infty)$.
\end{proof}

Next we state some calculus facts about the function $\Psi(\xi) = (2\pi \xi)^2 \exp(-2\pi^2 \xi^2)$.
\begin{lemma} \label{fact1}
$|\Psi|$ and $|\Psi^\prime|$ are decreasing for $\xi \in [2/3,\infty)$. (Hence $\Psi$ and $\Psi^\prime$ satisfy the hypotheses on ``$B$'' in Lemma~\ref{deltaexpression}.)
\end{lemma}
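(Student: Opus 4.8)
The plan is to deduce both monotonicity statements from the signs of $\Psi'$ and $\Psi''$ on $[2/3,\infty)$, where $\Psi$ is smooth; everything reduces to checking the sign of an explicit polynomial factor.

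First I would note that $\Psi(\xi) = 4\pi^2\xi^2 e^{-2\pi^2\xi^2} \geq 0$, so $|\Psi| = \Psi$, and compute
\[
\Psi'(\xi) = 8\pi^2\,\xi\,(1 - 2\pi^2\xi^2)\,e^{-2\pi^2\xi^2}.
\]
For $\xi \geq 2/3$ one has $2\pi^2\xi^2 \geq 8\pi^2/9 > 1$, hence $\Psi'(\xi) < 0$. This shows $|\Psi| = \Psi$ is decreasing on $[2/3,\infty)$, and moreover that $|\Psi'| = -\Psi'$ on that interval.

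It then remains to show $-\Psi'$ is decreasing on $[2/3,\infty)$, i.e.\ that $\Psi'' \geq 0$ there. Differentiating once more,
\[
\Psi''(\xi) = 8\pi^2\,(8\pi^4\xi^4 - 10\pi^2\xi^2 + 1)\,e^{-2\pi^2\xi^2},
\]
so, writing $u = \pi^2\xi^2$, the sign of $\Psi''(\xi)$ equals that of the quadratic $q(u) = 8u^2 - 10u + 1$. Its larger root is $(5+\sqrt{17})/8$, which is less than $5/4$ since $\sqrt{17} < 5$; and for $\xi \geq 2/3$ we have $u = \pi^2\xi^2 \geq 4\pi^2/9 > 4 > 5/4$. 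Hence $q(u) > 0$, so $\Psi'' > 0$ on $[2/3,\infty)$, and therefore $|\Psi'| = -\Psi'$ is decreasing there.

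There is no real obstacle here: the two derivative formulas are routine, and the only points needing (elementary) verification are the numerical inequalities $8\pi^2/9 > 1$ and $4\pi^2/9 > (5+\sqrt{17})/8$, both of which follow immediately from $\pi^2 > 9$ together with $\sqrt{17} < 5$.
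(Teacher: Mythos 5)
Your proof is correct: the formulas for $\Psi'$ and $\Psi''$ are right, and the sign checks (via $\pi^2>9$ and the root $(5+\sqrt{17})/8<5/4$ of $8u^2-10u+1$) do establish $\Psi'<0$ and $\Psi''>0$ on $[2/3,\infty)$, hence that $|\Psi|=\Psi$ and $|\Psi'|=-\Psi'$ are decreasing there. The paper states this lemma without proof as a calculus fact, and your direct verification is exactly the routine argument it leaves to the reader.
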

\begin{lemma} \label{fact2}
Let $m,n \in \{ 0,1,2,3 \}$. Then $\xi^{-m}(1-\xi)^n e^{4\pi^2 \xi}$ is increasing for $\xi \in [1/12,1/3]$.
\end{lemma}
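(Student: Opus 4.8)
The plan is to pass to the logarithmic derivative. On $[1/12,1/3]$ the function $f(\xi) := \xi^{-m}(1-\xi)^n e^{4\pi^2\xi}$ is strictly positive, so it is increasing there if and only if $(\log f)^\prime \geq 0$, i.e.\ if and only if
\[
4\pi^2 - \frac{m}{\xi} - \frac{n}{1-\xi} \geq 0 \qquad \text{for all } \xi \in [1/12,1/3] .
\]
Thus the whole lemma collapses to this single elementary inequality, to be checked uniformly over the finitely many pairs $(m,n) \in \{0,1,2,3\}^2$. (Note one cannot argue factor-by-factor, since $\xi^{-m}$ is decreasing; the interplay with the exponential is essential, and the log-derivative makes it transparent.)

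Next I would reduce to the single worst pair. The left-hand side above is decreasing in both $m$ and $n$, because the coefficients $-1/\xi$ and $-1/(1-\xi)$ are negative on the interval. Hence it suffices to treat $m=n=3$, where the expression becomes
\[
4\pi^2 - \frac{3}{\xi(1-\xi)} .
\]
On $[1/12,1/3]$ the parabola $\xi(1-\xi)$ is increasing (its vertex is at $\xi=1/2$, to the right of our interval), so $1/(\xi(1-\xi))$ is largest at the left endpoint $\xi=1/12$, where it equals $\tfrac{1}{(1/12)(11/12)} = \tfrac{144}{11}$. Consequently the displayed quantity is bounded below throughout $[1/12,1/3]$ by $4\pi^2 - \tfrac{432}{11}$.

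It remains only to confirm $4\pi^2 - \tfrac{432}{11} > 0$. Using $\pi > 3.1415$ gives $4\pi^2 > 4(3.1415)^2 > 39.47$, whereas $\tfrac{432}{11} = 39.2727\ldots < 39.28$, so the difference is positive. This last numerical step is the only delicate point: the margin is barely half a percent, so one must keep $\pi > 3.1415$ (or sharper) rather than rounding to $\pi \approx 3.14$. Once that inequality is secured, $(\log f)^\prime > 0$ on $[1/12,1/3]$ for every admissible $(m,n)$, so $f$ is (strictly) increasing there, and the lemma follows.
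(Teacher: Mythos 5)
Your argument is correct, and it supplies a proof for a statement the paper records only as a calculus fact without proof: passing to $(\log f)'(\xi) = 4\pi^2 - m/\xi - n/(1-\xi)$, reducing to the worst pair $m=n=3$, and bounding $3/(\xi(1-\xi)) \le 432/11$ at the left endpoint $\xi = 1/12$ is exactly the natural verification, and every step checks out. One small quibble: your caution that one must use $\pi > 3.1415$ is unnecessary, since even $\pi > 3.14$ gives $4\pi^2 > 39.43 > 432/11 \approx 39.28$; the margin is about $0.2$, comfortable rather than delicate.
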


Now we estimate the three terms in $\Delta_*(\Phi,\Psi)$.

\subsubsection*{Estimation of $\Delta(\Phi,\Psi)$.} We have $|\kappa| \leq 1$ and
\begin{align}
\Phi(\xi) & = \frac{\kappa(\xi)}{\Psi(\xi)}=\kappa(\xi) (2\pi \xi)^{-2} e^{2\pi^2 \xi^2} , \notag \\
\Psi(1-\xi) & = (2\pi)^2 e^{-2\pi^2} (1-\xi)^2 e^{4\pi^2 \xi} e^{-2\pi^2 \xi^2} , \label{psieq}
\end{align}
so that (by using Lemma~\ref{fact2} and evaluating at $\xi=1/3$)
\begin{equation}
|\Phi(\xi) \Psi(1-\xi)| < 0.006 , \qquad \xi \in [1/12,1/3] . \label{est2}
\end{equation}

Further, for $l \geq 2$ we have
\[
|\Psi(l-1/3)| < (2\pi)^2 l^2 e^{-2\pi^2 (l/2)^2} \leq (2\pi)^2 2^2 e^{l-2} e^{-\pi^2 l} ,
\]
so that by a geometric series,
\begin{equation} \label{psiltwo}
\sum_{l=2}^\infty |\Psi(l-1/3)| < (2\pi)^2 4 e^{-2\pi^2} / (1 - e^{1-\pi^2}) .
\end{equation}
Combining \eqref{psiltwo} with the fact that
\[
|\Phi(\xi)| < 200(2\pi)^{-2} , \qquad \xi \in [1/12,1/3] ,
\]
gives that
\[
\lVert \Phi \rVert_{L^\infty[1/12,1/3]} \sum_{l=2}^\infty
|\Psi(l-1/3)| < 0.000003 .
\]

Substituting this last estimate and \eqref{est2} into Lemma~\ref{deltaexpression} shows that
\begin{equation} \label{est3}
\Delta(\Phi,\Psi) < 0.02 .
\end{equation}

\subsubsection*{Estimation of $\Delta(\Theta,\Psi)$.} By definition of $\Phi=\kappa/\Psi$, we have
\begin{align}
|\Theta(\xi)| & = |\xi \Phi^\prime(\xi)| \notag \\
& \leq (2\pi)^{-2} e^{2\pi^2 \xi^2}
\begin{cases}
6\pi \xi^{-1} + 2 \xi^{-2} & \text{when $\xi \in [1/12,1/6]$} \\
3\pi \xi^{-1} + \big( 4\pi^2(1/3)^2 - 2 \big) \xi^{-2} & \text{when $\xi \in [1/6,1/3]$}
\end{cases} \label{thetaeq} \\
& < (2\pi)^{-2} \cdot 600 . \notag
\end{align}
Multiplying this last estimate by \eqref{psiltwo} shows
\begin{equation} \label{est4}
\lVert \Theta \rVert_{L^\infty[1/12,1/3]} \sum_{l=2}^\infty
|\Psi(l-1/3)| < 0.000007 .
\end{equation}

Using \eqref{psieq}, \eqref{thetaeq} and Lemma~\ref{fact2} gives that
\begin{equation}
|\Theta(\xi) \Psi(1-\xi)| < 0.031 , \qquad \xi \in [1/12,1/3]. \label{est5}
\end{equation}

Substituting \eqref{est4} and \eqref{est5} into Lemma~\ref{deltaexpression} shows that
\begin{equation} \label{est6}
\Delta(\Theta,\Psi) < 0.09 .
\end{equation}

\subsubsection*{Estimation of $\Delta(\Gamma,\Psi^\prime)$.}
Recall the definition
\[
\Gamma(\xi) = \xi \Phi(\xi)=\kappa(\xi) (2\pi)^{-2} \xi^{-1} e^{2\pi^2 \xi^2} .
\]
From
\[
\Psi^\prime(\xi) = 2(2\pi)^2 (\xi-2\pi^2 \xi^3) e^{-2\pi^2 \xi^2}
\]
we find for $\xi < 1$ that
\[
|\Psi^\prime(1-\xi)| \leq 2(2\pi)^2 e^{-2\pi^2} \big( (1-\xi)+2\pi^2 (1-\xi)^3 \big) e^{4\pi^2 \xi} e^{-2\pi^2 \xi^2} .
\]
Hence (by Lemma~\ref{fact2} and evaluating at $\xi=1/3$)
\begin{equation}
|\Gamma(\xi) \Psi^\prime(1-\xi)| < 0.055 , \qquad \xi \in [1/12,1/3] . \label{est8}
\end{equation}

Next,
\[
|\Psi^\prime(\xi)| \leq (2\pi)^4 \xi^3 e^{-2\pi^2 \xi^2} , \qquad \xi \geq 1 .
\]
Hence for $l \geq 2$,
\[
|\Psi^\prime(l-1/3)| \leq (2\pi)^4 l^3 e^{-2\pi^2 (l/2)^2} \leq (2\pi)^4 3^3 e^{l-3} e^{-\pi^2 l} ,
\]
so that by a geometric series,
\[
\sum_{l=2}^\infty |\Psi^\prime(l-1/3)| \leq 27 (2\pi)^4 e^{-1-2\pi^2} / (1 - e^{1-\pi^2}) .
\]
Combining this last estimate with the fact that
\[
|\Gamma(\xi)| < 30(2\pi)^{-2} , \qquad \xi \in [1/12,1/3] ,
\]
gives that
\begin{equation} \label{est9}
\lVert \Gamma \rVert_{L^\infty[1/12,1/3]} \sum_{l=2}^\infty
|\Psi^\prime(l-1/3)| < 0.00004 .
\end{equation}

Substituting \eqref{est8} and \eqref{est9} into Lemma~\ref{deltaexpression} shows that
\begin{equation} \label{est10}
\Delta(\Gamma,\Psi^\prime) < 0.16 .
\end{equation}

\subsubsection*{Estimation of $\Delta_*(\Phi,\Psi)(\Phi,\Psi)$.}
We obtain that
\[
\Delta_*(\Phi,\Psi) = \Delta(\Phi,\Psi) + 2\Delta(\Theta,\Psi) + 2\Delta(\Gamma,\Psi^\prime) < 0.52 ,
\]
by summing estimates \eqref{est3}, \eqref{est6} and \eqref{est10}. The proof is complete.

\end{document}